\documentclass[11pt]{amsart}
\usepackage{graphicx}
\usepackage[active]{srcltx}
 \makeatletter
\renewcommand*\subjclass[2][2000]{%
  \def\@subjclass{#2}%
  \@ifundefined{subjclassname@#1}{%
    \ClassWarning{\@classname}{Unknown edition (#1) of Mathematics
      Subject Classification; using '1991'.}%
  }{%
    \@xp\let\@xp\subjclassname\csname subjclassname@#1\endcsname
  }%
}
 \makeatother
\usepackage{enumerate,url,amssymb,  mathrsfs}

\newtheorem{theorem}{Theorem}[section]

\newtheorem*{lemma*}{Lemma}

\newtheorem{corollary}[theorem]{Corollary}

\theoremstyle{definition}

\theoremstyle{remark}
\newtheorem{remark}[theorem]{Remark}

\numberwithin{equation}{section}


\def\XXint#1#2#3{{\setbox0=\hbox{$#1{#2#3}{\int}$}
\vcenter{\hbox{$#2#3$}}\kern-.5\wd0}}

\def\le{\leqslant}
\def\ge{\geqslant}
\setcounter{tocdepth}{1}
\begin{document}

\title{Schwarz lemma for harmonic mappings in the unit ball}

\author[Kalaj]{David Kalaj}
\address{University of Montenegro, Faculty of Natural Sciences and
Mathematics, Cetinjski put b.b. 81000 Podgorica, Montenegro}
\email{davidkalaj@gmail.com}

\keywords{Harmonic mappings,  Schwarz inequality, Hardy space}

 \subjclass{Primary 31A05;
Secondary 42B30 }


\maketitle

\makeatletter\def\thefootnote{\@arabic\c@footnote}\makeatother

\begin{abstract}
We prove the following generalization of Schwarz lemma for harmonic mappings. If $u$ is a harmonic mapping of the unit ball $B_n$ onto itself such that $u(0)=0$ and $\|u\|_p:=\left(\int_S|u(\eta)|^pd\sigma(\eta)\right)^{1/p}<\infty$, $p\ge 1$ then  $|u(x)|\le g_p(|x|)\|u\|_p$ for some smooth sharp function $g_p$ vanishing in $0$. Moreover we provide sharp constant $C_p$ in the inequality $\|Du(0)\|\le C_p\|u\|_p$.  Those two results extend some known result from harmonic mapping theory (\cite[Chapter~VI]{ABR}).
\end{abstract}

\maketitle

\section{Introduction}\label{intsec}
On the paper $\mathbf{R}^m$ is the standard Euclidean space with the norm $|x|=\sqrt{\sum x_i^2}$.
Let $p\ge 1$ and assume that $H^p$ is the Hardy space of the  holomorphic mappings on the unit ball $B_n\subset \mathbf{C}^n\cong \mathbf{R}^{2n}$.
In their classical paper \cite{maro}, Macintyre and  Rogosinski proved the following result: Let $f$ be holomorphic on the unit disk such that $f(0)=0$ and such that $\|f\|_{H^p}<\infty$ for $p\ge 1$, then \begin{equation}\label{mecroz}|f(z)|\le \frac{|z|}{(1-|z|^2)^{1/p}}\|f\|_{H^p}\end{equation} with extremal functions $f(w)=\frac{A w}{(1-\bar z w)^{2/p}}$. This is a generalization of Schwarz lemma (for $p=\infty$ it coincides with the classical Schwarz lemma). Then for holomorphic mappings on the unit ball $B_n\subset \mathbf{C}^n$ we have the following result of Zhu \cite[Theorem~4.17]{zhu}: \begin{equation}\label{zhu}|f(z)|\le \frac{1}{(1-|z|^2)^{n/p}}\|f\|_p.\end{equation} Let us sketch the proof of \eqref{zhu}, which imply \eqref{mecroz}, for $n=1$. By definition \begin{equation}\label{zhuk}\|f\|^p_p:=\|f\|_{H^p}^p=\int_S|f(\eta)|^pd\sigma(\eta).\end{equation}
 Here as $d\sigma$ is the normalized rotationally invariant Borel measure on the unit sphere $S=S_{n}=\partial B_n$.
 Choose the holomorphic change $\eta(w)=\varphi_z(w)$ in \eqref{zhuk}, where $\varphi_z$ is an automorphism of the unit ball such that $\varphi(0)=z$. Now by using holomorphic mapping \begin{equation}F_r(w)=f_r(\varphi_z(w))\left(\frac{1-|z|^2}{(1-\left<z,w\right>)^2}\right)^{n/p},\end{equation} and by making use of the mean value inequality we obtain \eqref{zhu}. In order to derive \eqref{mecroz} with $n=1$, from \eqref{zhu}, just alike for the classical proof of Schwarz lemma we make use of holomorphic mapping $g(z)=f(z)/z$, whose $H^p$ norm coincides with the $H^p$ norm of $f$. However an analogous inequality for higher-dimensional case ($n>1$) cannot be proved in the same way. In this paper we will attack  this problem for the class of harmonic mappings, which contain holomorphic mappings.

Assume now that  that $1\le p\le \infty$ and let $1/p+1/q=1$ and consider the Hardy class $\mathcal{H}^p$ of harmonic mappings defined in the unit ball, i.e. of harmonic  mappings $f:B^n\to \mathbf{R}^m$ with  $$\|f\|_p:=\sup_{r}\left(\int_S|f(r\eta)|^pd\sigma(\eta)\right)^{1/p}<\infty.$$ Here as before  $d\sigma$ is the normalized rotationally invariant Borel measure on the unit sphere $S=S^{n-1}$.

It is well known
that a harmonic function (and a mapping)  $u\in \mathcal{H}^p(B)$, $p>1$, where $B=B^n$ is the unit ball with the boundary $S=S^{n-1}$,  has the following integral representation
\begin{equation}\label{poi}u(x)=\mathcal{P}[f](x)=\int_{S^{n-1}}P(x,\zeta)f(\zeta)d\sigma(\zeta),\end{equation}
where
$$P(x,\zeta)=\frac{1-|x|^2}{|x-\zeta|^n}, \zeta\in S^{n-1} $$
is Poisson kernel and $\sigma$ is the unique
normalized rotation invariant Borel measure on $S^{n-1}$ and $|\cdot |$ is the Euclidean norm.

Let us formulate the classical Schwarz lemma for harmonic mappings on the unit ball $B^n\subset \mathbf{R}^n$ and assume its image is $\mathbf{R}^m$. Let $f$ be harmonic on the unit ball, and assume that $\|f\|_\infty<\infty$ and that $f(0)=0$, then we have the following sharp inequality
$$|f(x)|\le U(rN)\|f\|_\infty.$$  Here  $r=|x|$, $N=(0,\dots,0,1)$ and $U$ is a harmonic function of the unit ball into $[-1,1]$ defined by \begin{equation}\label{uext}U(x)= \mathcal{P}[\chi_{S^+}-\chi_{S^-}](x),\end{equation} where $\chi$ is the indicator function and $S^+=\{x\in S: x_n \ge 0\},$ $S^-=\{x\in S: x_n \le 0\}.$

Assume now that $p<\infty$.  We are going to find a sharp function $g(r)$ satisfying the condition $g(0)=0$ in the inequality $$|f(x)|\le g_p(r)\|f\|_p, \ \ \ f\in \mathcal{H}^p,\ \  f(0)=0.$$
\section{The main result}
We prove the following theorem
\begin{theorem}
 Let $p\ge 1$, and let $q$ be its conjugate  and define
 $$g_p(r)=\inf_{a\in [0,\infty)}\left(\int_S |P_r(\eta)-a|^qd\sigma(\eta)\right)^{1/q}.$$  Then for $1\le p<\infty $, $g_p:[0,1)\to [0,\infty)$  is a smooth increasing diffeomorphism  with $g_p(0)=0$,  and for every $f\in \mathcal{H}^p$ with $f(0)=0$, we have
\begin{equation}\label{pri}|f(x)|\le g_p(|x|)\|f\|_p\end{equation} and \begin{equation}\label{sec}\|Df(0)\|\le n  \left(\frac{\Gamma\left[\frac{n}{2}\right] \Gamma\left[\frac{1+q}{2}\right]}{\sqrt \pi\Gamma\left[\frac{n+q}{2}\right]}\right)^{\frac{1}{q}}\|f\|_p.\end{equation} Both inequalities \eqref{pri} and \eqref{sec} are sharp. For $p=\infty$, we have $g_\infty(r)=U(rN)$ which coincides with Schwarz lemma and $g_\infty$  is an increasing diffeomorphism of $[0,1]$ onto itself.
Here $Df(0):\mathbf{R}^n\to\mathbf{R}^m$ is the formal derivative and $\|Df(0)\|=\sup_{|h|=1}|Df(0)h|$.
\end{theorem}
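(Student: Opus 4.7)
The plan is to reduce both inequalities to an $L^p$--$L^q$ duality argument built on the Poisson representation \eqref{poi}. Writing $f = \mathcal{P}[f^*]$ for the boundary datum, the hypothesis $f(0)=0$ gives $\int_S f^*\, d\sigma = 0$, so for any constant $a \in [0,\infty)$,
\[
f(x) = \int_S \bigl(P(x,\zeta)-a\bigr)\, f^*(\zeta)\, d\sigma(\zeta),
\]
and H\"older's inequality yields $|f(x)| \le \|P(x,\cdot)-a\|_q \|f\|_p$. Rotational invariance of $d\sigma$ identifies this $L^q$-norm with $\|P_{|x|}-a\|_q$, so the infimum over $a$ produces \eqref{pri}. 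For the derivative bound I would differentiate under the integral: a direct computation yields $\nabla_x P(0,\zeta) = n\zeta$, hence for $|h|=1$,
\[
|Df(0)h| = \Bigl| n \int_S \inn{\zeta,h}\, f^*(\zeta)\, d\sigma(\zeta) \Bigr| \le n \,\|\inn{\cdot,h}\|_q \|f\|_p.
\]
Rotational symmetry reduces $\|\inn{\cdot,h}\|_q$ to $\bigl(\int_S |\zeta_n|^q\, d\sigma\bigr)^{1/q}$, and a standard Beta--Gamma identity evaluates this integral to the constant appearing in \eqref{sec}.

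Both bounds are sharp by the corresponding duality extremal. For \eqref{pri}, I would let $a_* = a_*(|x|)$ be the minimizer in the definition of $g_p$ and take $f^*(\zeta) := |P(x,\zeta)-a_*|^{q-2}\bigl(P(x,\zeta)-a_*\bigr)\, v$ for a fixed unit vector $v\in\mathbf{R}^m$. The first-order condition for $a_*$, obtained by differentiating $\int_S|P_r-a|^q\, d\sigma$ in $a$, reads $\int_S |P_r-a_*|^{q-2}(P_r-a_*)\, d\sigma = 0$, which is exactly $\int f^*\, d\sigma = 0$; thus $f:=\mathcal{P}[f^*]$ is harmonic with $f(0)=0$, and both H\"older and the $a$-minimization become equalities at $x$. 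The extremal for \eqref{sec} is the analogue $f^*(\zeta) := |\inn{\zeta,h}|^{q-2}\inn{\zeta,h}\, v$, whose spherical mean vanishes automatically by odd symmetry.

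For the properties of $g_p$, strict convexity of $a\mapsto\|P_r-a\|_q^q$ (for $q>1$) gives a unique minimizer $a_*(r)$, and the implicit function theorem applied to the first-order condition shows that $a_*$, and hence $g_p$, depend smoothly on $r\in[0,1)$. The envelope theorem annihilates the $a_*'(r)$ contribution when differentiating $g_p(r)^q$ in $r$, reducing monotonicity to showing $\int_S |P_r-a_*|^{q-2}(P_r-a_*)\,\partial_r P_r\, d\sigma > 0$. As $r\to 1^-$, a concentration argument on the singular behaviour of $P_r$ near $N$ yields $g_p(r)\to\infty$ for $1\le p<\infty$. In the limiting case $p=\infty$, $q=1$, the $L^1$-best constant approximation is realized at the median of $P_r$; monotonicity of $P_r$ in $\eta_n$ identifies this median with the equator value, so $\inf_a\|P_r-a\|_1 = \int_{S^+}P_r\,d\sigma - \int_{S^-}P_r\,d\sigma = U(rN)$, matching \eqref{uext}. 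I expect this final step to be the main obstacle---particularly a clean proof of strict monotonicity and of the boundary blow-up---because the optimizer $a_*(r)$ is implicit except at $q\in\{1,2,\infty\}$.
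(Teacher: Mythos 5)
Your treatment of the two inequalities themselves coincides with the paper's: the same subtraction of a constant justified by $\int_S f^*\,d\sigma=0$, the same H\"older/duality step, and the same extremal $f^*=|P_r-a_*|^{q-2}(P_r-a_*)$ (the paper writes it as $|P_R-a(R)|^{q/p}\sign(P_R-a(R))$, which is the same function since $q/p=q-1$), with $f(0)=0$ coming from the first-order condition for $a_*$. Two points where you genuinely differ, both to your credit: for \eqref{sec} you differentiate the Poisson kernel at the origin and apply H\"older directly, rather than the paper's route of first proving $\|Df(0)\|\le g_p'(0)\|f\|_p$ and then computing $g_p'(0)$ by an envelope argument with the expansion $P_r(\eta)=1+nr\eta_n+O(r^2)$; and you supply an explicit extremal $|\inn{\zeta,h}|^{q-2}\inn{\zeta,h}\,v$ for \eqref{sec}, which the paper does not spell out. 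The smoothness of $a_*$ and $g_p$ via the implicit function theorem, and the identification of the $p=\infty$ case with the median/equator value of $P_r$, also match the paper.

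The one genuine gap is strict monotonicity. Your envelope-theorem reduction to the positivity of $\int_S|P_r-a_*|^{q-2}(P_r-a_*)\,\partial_rP_r\,d\sigma$ is a correct identity, but the positivity itself is not established and is not routine: the natural route would be a Chebyshev-type correlation inequality using that $|P_r-a_*|^{q-2}(P_r-a_*)$ is increasing in $\eta_n$ with zero mean, but that requires $\partial_rP_r$ to also be increasing in $\eta_n$, which fails (e.g.\ near $\eta_n=r$ the mixed derivative has the sign of $1-3r^2$ up to a positive factor, so it is negative for $r>1/\sqrt3$). The paper avoids this entirely with a soft argument: by sharpness of \eqref{pri} there is, for each $r$, a nonconstant $f$ with $\|f\|_p=1$ and $g_p(r)=|f(x_0)|=\max_{|x|\le r}|f(x)|$, and the maximum principle then gives $g_p(r)=|f(x_0)|<\max_{|x|=s}|f(x)|\le g_p(s)$ for $s>r$. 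Similarly, for the blow-up $g_p(r)\to\infty$ your concentration sketch would work but is not carried out; the paper instead observes that any unbounded $f\in\mathcal H^p$ with $f(0)=0$ forces $\sup_r g_p(r)\ge\sup_x|f(x)|/\|f\|_p=\infty$. I recommend replacing your monotonicity step with the maximum-principle argument, since the integral inequality you reduced to is the one place where your plan could actually fail.
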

\begin{remark}
It seems unlikely that we can explicitly express the function $g_p(r)$ for general $p$, however we demonstrate some special cases $p=1,2,\infty$ in Section~\ref{spc}, where among the other fact we prove the last part of this theorem. For some sharp pointwice estimates for the first derivative of harmonic mapping we refer to papers \cite{kha,km,km1, kavu}. Some optimal estimates of the harmonic function defined in the unit ball has been obtained in \cite{kam1}, but no normalization  $f(0)=0$ is imposed, so the obtained inequalities in \cite{kam1} are not sharp in the context of this paper.
\end{remark}
\begin{proof}
Let $x\in B_n$ and $\eta\in S^{n-1}$ with $r=|x|$ and let $$P_x(\eta)=\frac{1- r^2}{|x-\eta|^n}$$ and define
$$P_r(\eta) = \frac{1-r^2}{(1+r^2-2r \eta_n)^{n/2}}.$$
Then $$f(x)=\int_S P_x(\eta)f(\eta)d\sigma(\eta),$$ where $S=S^{n-1}$ is the unit sphere. Now since $f(0)=0$, it follows that $$\int_S f(\eta)d\sigma(\eta)=0, $$ and so
$$f(x)=\int_S (P_x(\eta)-a)f(\eta)d\sigma(\eta),$$ for a number $a=a(r)$ not depending on $\eta$.

Hence by using H\"older inequality, and unitary transformations of the unit sphere we have \begin{equation}\label{holder}|f(x)|\le \left(\int_S |P_r(\eta)-a|^qd\sigma(\eta)\right)^{1/q}\|f\|_p. \end{equation} So we are going to consider the minimum of the following function $$\Phi_r(a)=\left(\int_S|P_r(\eta)-a|^q d\sigma(\eta)\right)^{1/q}.$$ Then $\Phi_r(a)$ is convex and it satisfies the conditions $\Phi_r'(0)<0$ and $\Phi_r(\infty)=\infty$. This implies that there is a unique constant $a^*=a(r)\in(0,\infty)$ such that $$\Phi_r(a^*)=\min_{a\in \mathbf{R}}\Phi_r(a).$$ To show that $\Phi_r$ is convex, observe the following simple fact $\Phi_r(a)=\|P_r-a\|_{L^q}$. So $$\Phi_r(\lambda a+(1-\lambda)b)\le \|\lambda(P_r-a)\|_{L^q}+\|(1-\lambda)(P_r-b)\|_{L^q}=\lambda \Phi_r(a)+(1-\lambda) \Phi_r(b).$$ In order to prove that $\Phi_r'(0)<0$, by calculation we find out that $$\Phi_r'(0)=-\int_S |P_r|^{q-1}d\sigma(\eta)\left(\int_S |P_r|^{q}d\sigma(\eta)\right)^{1/q-1}<0.$$
Furthermore
\begin{equation}\label{fip}\Phi_r'(a)=-\left(\int_S |P_r(\eta)-a|^{q}d\sigma(\eta)\right)^{1/q-1}F(r,a),\end{equation} where
$$F(r,a)=\int_S (P_r(\eta)-a)|P_r(\eta)-a|^{q-2} d\sigma(\eta).$$
So
$$F_a(r,a)=(q-1)\int_S |P_r(\eta)-a|^{q-2} d\sigma(\eta)>0,$$ and this implies in particular that $F(r,a)$ as a function of $a$ is strictly increasing, so $\Phi_r$ has only one stationary point which is its minimum which we denote by $a^*=a(r)$.

Since \begin{equation}\label{fstac}F(r,a(r))=0,\end{equation} the implicit function theorem  implies that there is a smooth function $a^*$ depending on $r$ such that $$\frac{\partial a(r)}{\partial r}=-\frac{\frac{\partial F}{\partial r}}{\frac{\partial F}{\partial a}}.$$

Thus the function $g_p(r)=\Phi_r(a(r))$ is smooth function of $r$ as a composition of smooth functions. It satisfies the condition $g_p(0)=0$ and we have $$|f(x)|\le g_p(|x|)\|f\|_{L^p}.$$

Moreover $$g_p'(0)=\left(\int_S|n\eta_n |^qd\sigma(\eta)\right)^{1/q}=n  \left(\frac{\Gamma\left[\frac{n}{2}\right] \Gamma\left[\frac{1+q}{2}\right]}{\sqrt \pi\Gamma\left[\frac{n+q}{2}\right]}\right)^{\frac{1}{q}}$$

So $$\|Df(0)\|\le n  \left(\frac{\Gamma\left[\frac{n}{2}\right] \Gamma\left[\frac{1+q}{2}\right]}{\sqrt \pi\Gamma\left[\frac{n+q}{2}\right]}\right)^{\frac{1}{q}}$$

Since
$g_p(r)=\Phi(r,a(r))$ we have $$\partial_r g_p(r)=\partial_1\Phi(r,a(r))+\partial_2 \Phi(r,a(r))\partial_r a(r)=\partial_1\Phi(r,a(r)).$$
Since
$$\Phi(r,a)=\left(\int_S\left|P_r(\eta)-a\right|^q d\sigma(\eta)\right)^{1/q},$$ we have that

$$\partial_1 \Phi(r,a)= \int_S \partial_r P_r(\eta)(P_r(\eta)-a)|P_r(\eta)-a|^{q-2}d\sigma(\eta)\left(\int_S\left|P_r(\eta)-a\right|^q d\sigma(\eta)\right)^{1/q-1}.$$ Since $$P_r(\eta) = \frac{1-r^2}{(1+r^2-2 r \eta_n)^{n/2}},$$ it follows that $$P_r(\eta)-a=\frac{1-r^2}{(1+r^2-2 r \eta_n)^{n/2}}-a = n\eta_n r + (1-a)+O(r^2),$$ and hence $$\partial_1 \Phi(r,1)=O(r)+\left(\int_S|n\eta_n |^qd\sigma(\eta)\right)^{1/q}.$$
Next we have $\lim_{r\to 0}\partial_1\Phi(r,a(r))= \partial_a\Phi(0,1)$, and so $$\partial_r g_p(0) = \left(\int_S|n\eta_n |^qd\sigma(\eta)\right)^{1/q}.$$

 In order to show that the inequality is sharp, fix $x$ and without loosing the generality assume that $x=RN$, with $R=|x|$ and let $$f_R(\eta)=|P_R(\eta)-a(R)|^{q/p}\mathrm{sign}(P_R(\eta)-a(R))$$ and let $u_R(y)=P[f_R](y)$. From \eqref{fstac} $$F(R,a)=\int_S (P_R(\eta)-a(R))|P_R(\eta)-a(R)|^{q-2} dt=0.$$ Hence we obtain that $u_R(0)=0$. Now, the H\"older inequality \eqref{holder} is an equality for $u_R$ in $x$. This implies the sharpness of inequality. In order to prove that for $p<\infty$, $\lim_{r\to 1}g_p(r)=\infty$, let $f\in \mathcal{H}_p\setminus  \mathcal{H}_\infty$ and assume without loosing of generality that $f(0)=0$. Then $$\sup_{r}g_p(r)\ge \frac{\sup_x|f(x)|}{\|f\|_p}=\infty.$$ Finally prove that $g_p$ is strictly increasing. Let $r<s$ and choose $\|f\|_p=1$ such that $g_p(r)=|f(x_0)|=\max_{|x|\le r}|f(x)|$. Clearly $f$ is not a constant function. Then by maximum principle $|f(x_0)|<\max_{|x|=s}|f(x)|\le g_p(s)$. So $g_p$ is a strictly increasing function.

  The last part of the proof, i.e. the case $p=\infty$, follows from the previous proof and the next section.
 \end{proof}
 As a corollary of our main result we obtain
 \begin{corollary}
 If $f\in \mathcal{H}^2$, then $\|Df(0)\|\le \sqrt{n}\sqrt{\|f\|^2_2-|f(0)|^2|}$.
 \end{corollary}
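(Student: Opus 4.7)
The plan is to reduce to the normalized case $f(0)=0$ already handled by the main theorem and then to evaluate the sharp constant explicitly for $p=2$.

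First I would specialize inequality \eqref{sec} to $p=q=2$. Using $\Gamma[3/2]=\sqrt\pi/2$ and the recursion $\Gamma[(n+2)/2]=(n/2)\,\Gamma[n/2]$, the constant collapses to
$$n\left(\frac{\Gamma[n/2]\,\Gamma[3/2]}{\sqrt\pi\,\Gamma[(n+2)/2]}\right)^{1/2}=n\left(\frac{1}{n}\right)^{1/2}=\sqrt n.$$
Hence the Theorem gives $\|Df(0)\|\le \sqrt n\,\|f\|_2$ whenever $f\in \mathcal{H}^2$ and $f(0)=0$.

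Next I would remove the normalization. Set $g(x):=f(x)-f(0)$. Then $g\in \mathcal{H}^2$, $g(0)=0$, and $Dg(0)=Df(0)$, so by the previous step $\|Df(0)\|=\|Dg(0)\|\le \sqrt n\,\|g\|_2$.

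It remains to compute $\|g\|_2^2$. Since $f$ is the Poisson integral of its boundary values, the mean value property gives $\int_S f(\eta)\,d\sigma(\eta)=f(0)$. Expanding the integrand,
$$\|g\|_2^2=\int_S|f(\eta)-f(0)|^2\,d\sigma(\eta)=\int_S|f(\eta)|^2\,d\sigma-2\left\langle\int_S f(\eta)\,d\sigma,f(0)\right\rangle+|f(0)|^2,$$
which simplifies to $\|f\|_2^2-|f(0)|^2$. Combining with the previous display yields $\|Df(0)\|\le \sqrt n\sqrt{\|f\|_2^2-|f(0)|^2}$. There is no real obstacle here; the only nontrivial step is verifying the arithmetic that collapses the Gamma-function expression to $\sqrt n$.
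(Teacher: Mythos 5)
Your proposal is correct and follows essentially the same route as the paper: reduce to $g=f-f(0)$, use $Dg(0)=Df(0)$, compute $\|g\|_2^2=\|f\|_2^2-|f(0)|^2$ via the mean value property, and apply the $p=2$ case of the main theorem. The only cosmetic difference is that you extract the constant $\sqrt n$ by simplifying the Gamma-function expression in \eqref{sec}, whereas the paper reads it off as $g_2'(0)$; these are the same number.
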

 \begin{proof}
 Let $g(x)=f(x)-f(0)$, then $Df(0)=Dg(0)$, on the other hand $$\|g\|_2^2=\left<f-f(0),f-f(0)\right>=\|f\|^2+|f(0)|^2-2\left<f,f(0)\right>=\|f\|_2^2-|f(0)|^2.$$  On the other hand
 $$|Dg(0)|\le \lim_{r\to 0} g_2'(r)\|g\|_2= \sqrt{n}\|g\|_2.$$ The result follows.
 \end{proof}

 \section{Special cases}\label{spc}

\subsection{The case $p=\infty$}
In this case we deal with the extremal problem $$\inf_a \int_S |P_r(\eta)-a| d\sigma(\eta).$$ Let $a_0=\frac{1-r^2}{(1+r^2)^{n/2}}$. Then
\[\begin{split}\int_S |P_r(\eta)-a_0| d\sigma(\eta)&=\int_{S^+} P_r(\eta)d\sigma(\eta)-\int_{S^-} P_r(\eta)d\sigma(\eta)
\\&=\int_{S^+} (P_r(\eta)-a)d\sigma(\eta)-\int_{S^-} (P_r(\eta)-a)d\sigma(\eta)\\&\le \inf_a \int_S |P_r(\eta)-a_0| d\sigma(\eta).\end{split}\] So $a^*=\frac{1-r^2}{(1+r^2)^{n/2}}.$

This implies that $$\|f(z)\|\le U(rN)\|f\|_\infty,$$ which is known as the classical Schwarz lemma for harmonic mappings.
\subsection{The case $p=2$}
In this case we deal with the extremal problem $$g_p(r)=\left(\inf_a \int_S |P_r(\eta)-a|^2 d\sigma(\eta)\right)^{1/2}.$$ We have (see \cite[p.~140]{ABR})
$$\int_S |P_r(\eta)-a|^2 d\sigma(\eta)=\int_S P^2_r(\eta) d\sigma(\eta)+a^2\int_S  d\sigma(\eta)-2 a \int_S P_r(\eta) d\sigma(\eta)$$

$$=\frac{1-|x|^4}{(1-2|x|^2+|x|^4)^{n/2}}+a^2-2a.$$ So $a^*=1$ and $$\left(\inf_a \int_S |P_r(\eta)-a|^2 d\sigma(\eta)\right)^{1/2} =\sqrt{\frac{1+|x|^2}{(1-|x|^2)^{n-1}}-1}.$$
This implies that $$|f(x)|\le\left( \sqrt{\frac{1+r^2}{(1-r^2)^{n-1}}-1}\right)\|f\|_2$$ which coincides with analogous statement in \cite[p.~140]{ABR}.
\subsection{The case $p=1$}
In this case we deal with the extremal problem $$g_p(r)=\inf_a \sup_\eta |P_r(\eta)-a|.$$ Since $$\max_\eta P_r(\eta)=\frac{1-r^2}{(1-r)^n}$$ and
 $$\min_\eta P_r(\eta)=\frac{1-r^2}{(1+r)^n}, $$ we easily conclude that $$g_p(r)=\frac{1}{2}\left(\frac{1-r^2}{(1-r)^n}-\frac{1-r^2}{(1+r)^n}\right)$$
(In this case $a^* =\frac{1}{2}\left(\frac{1-r^2}{(1-r)^n}+\frac{1-r^2}{(1+r)^n}\right).$)
So $$|f(x)|\le \frac{1}{2}\left(\frac{1-r^2}{(1-r)^n}-\frac{1-r^2}{(1+r)^n}\right)\|f\|_1.$$

\end{document}